\newtheorem{Thm}{Theorem}\numberwithin{Thm}{section}
\numberwithin{Def}{section}
\newtheorem{Lem}{Lemma}\numberwithin{Lem}{section}
\numberwithin{Prop}{section}
\numberwithin{Cor}{section}
\numberwithin{Ass}{section}
\theoremstyle{definition}
\numberwithin{Rem}{section}
\numberwithin{Ex}{section}
\newcommand{\bR}{\ensuremath{\mathbb{R}}}
\newcommand{\eps}{\ensuremath{\varepsilon}}
\renewcommand{\tilde}{\widetilde}
\renewcommand{\bar}{\overline}
\newcommand{\cbra}[1]{\left( #1 \right)}
\newcommand{\kbra}[1]{\left\{ #1 \right\}}
\newcommand{\n}{\nonumber}
\numberwithin{equation}{section}
\title[Non-Markov property of eigenvalue processes]
{Non-Markov property of certain eigenvalue processes 
analogous to Dyson's model}
\author[R. Fukushima]{Ryoki Fukushima}
\author[A. Tanida]{Atsushi Tanida}
\author[K. Yano]{Kouji Yano}
\address{
{\rm Ryoki Fukushima}\\
Department of Mathematics, Kyoto University, Kyoto 606-8502, JAPAN}
\email{fukusima@math.kyoto-u.ac.jp}
\address{
{\rm Atsushi Tanida}\\
Department of Mathematics, Kyoto University, Kyoto 606-8502, JAPAN}
\email{tanida@math.kyoto-u.ac.jp}
\address{
{\rm Kouji Yano}\\
Graduate School of Science, Kobe University, Kobe 657-8501, JAPAN} 
\email{kyano@math.kobe-u.ac.jp}
\subjclass[2000]{15A52, 60-06, 60J65, 60J99}
\keywords{non-Markov property, random matrix, eigenvalue process, Dyson's model, beta-ensembles}
\thanks{The research of the third author was supported by KAKENHI (20740060)}
\begin{document}
\begin{abstract}
It is proven that the eigenvalue process of Dyson's random matrix process 
of size two 
becomes non-Markov 
if the common coefficient $1/\sqrt{2}$ in the non-diagonal entries 
is replaced by a different positive number. 
\end{abstract}

\maketitle

\section{Introduction}
Dyson \cite{Dys62} has introduced the matrix-valued stochastic process 
\begin{align}
\Xi(t) = 
\begin{pmatrix}
B_{1,1}(t) & \frac{1}{\sqrt{2}} B_{1,2}(t) & 
\cdots & \frac{1}{\sqrt{2}} B_{1,N}(t) \\
\frac{1}{\sqrt{2}} \bar{B_{1,2}(t)} & B_{2,2}(t) & 
\cdots & \frac{1}{\sqrt{2}} B_{2,N}(t) \\
\vdots & \vdots 
& \ddots & \vdots \\
\frac{1}{\sqrt{2}} \bar{B_{1,N}(t)} & \frac{1}{\sqrt{2}} \bar{B_{2,N}(t)} & 
\cdots & B_{N,N}(t) 
\end{pmatrix}
\n
\end{align}
to model the dynamics of particles with the Coulomb type interactions, 
where $ B_{i,i} $'s are real Brownian motions 
and $ B_{i,j} $'s for $ i<j $ are complex Brownian motions 
all of which are mutually independent. 
He proved that the eigenvalue processes $ \lambda_1,\ldots,\lambda_N $ satisfy 
the (system of) stochastic differential equations 
\begin{align}
d\lambda_i(t) 
= d\beta_i(t) + \frac{\beta}{2} \sum_{j \neq i} \frac{1}{\lambda_i(t) - \lambda_j(t)} dt 
\n
\end{align}
with $ \beta=2 $. 
It has been proven later that if the complex Brownian motions are replaced 
by real or quaternion Brownian motions, the eigenvalue processes satisfy 
similar stochastic differential equations with $\beta=1$ or $4$, respectively. 
(See \cite{Bru91, KT04} for discussions based on the stochastic analysis.) 
These processes are now called Dyson's Brownian motion models for 
GOE, GUE, and GSE when $\beta = 1, 2$, and $4$, respectively. 
In any case, it is remarkable that the process 
$ \Lambda=(\lambda_1,\ldots,\lambda_N) $ is Markov.

We may ask the following question: 
``Does the process $ \Lambda $ remain Markov 
if we replace the common coefficient $ 1/\sqrt{2} $ by a different positive number?" 
In this paper, we give the {\em negative} answer to this question 
when the matrix size $ N=2 $. 

Let $ c \ge 0 $ and $ \delta > 0 $. 
Consider the $ 2 \times 2 $-matrix-valued process 
\begin{align}
\Xi^{c,\delta}(t) = 
\begin{pmatrix}
B_1(t) & \sqrt{c/2} \, \xi^{\delta}(t) \\
\sqrt{c/2} \, \xi^{\delta}(t) & B_2(t) 
\end{pmatrix}
\label{intro: Xi}
\end{align}
where $ B_1 $ and $ B_2 $ are two independent standard Brownian motions 
and $ \xi^{\delta} $ is a Bessel process of dimension $ \delta $ starting from 0 
which is independent of $ B_1 $ and $ B_2 $. 
We see in Lemma \ref{lem: GOE} that $ \Xi^{c,\delta} $ with $ \delta=1,2 $, or 4
is unitarily equivalent in law to 
\begin{align}
\tilde{\Xi}^{c,\delta}(t) = 
\begin{pmatrix}
B_1(t) & \sqrt{c/2} \, B_3(t) \\
\sqrt{c/2} \, \overline{B_3(t)} & B_2(t) 
\end{pmatrix}\label{cDyson}
\end{align}
with $B_3$ a real, complex, or quaternion Brownian motion independent of 
$B_1$ and $B_2$, respectively. 
Let $ \lambda_1(t) $ and $ \lambda_2(t) $ for $ t \ge 0 $ denote 
the eigenvalues of the Hermitian matrix $ \Xi^{c,\delta}(t) $ 
such that $ \lambda_1(t) \ge \lambda_2(t) $. 
Define the two-dimensional process 
$ \Lambda^{c,\delta} = (\lambda_1,\lambda_2) $.

When $ c=0 $, 
$ \lambda_1(t) $ and $ \lambda_2(t) $ are 
nothing but the order statistics 
of $ B_1(t) $ and $B_2(t) $, that is, 
$ \lambda_1(t)=\max \kbra{B_1(t),B_2(t)} $ 
and $ \lambda_2(t)=\min \kbra{B_1(t),B_2(t)} $. 
Hence it is obvious that the process $ \Lambda^{0,\delta} $ is Markov. 

When $ c=1 $, the process \eqref{intro: Xi} is a time-dependent version 
of Dumitriu-Edelman's matrix model for beta-ensembles (cf.~\cite{DE02}) and we see in Lemma \ref{lem: Dyson} that 
the processes $ \lambda_1(t) $ and $ \lambda_2(t) $ satisfy 
Dyson's stochastic differential equations with index $ \beta=\delta $ given by 
\begin{align}
d \lambda_1(t) =& d \beta_1(t) + \frac{\delta}{2(\lambda_1(t)-\lambda_2(t))} dt , 
\label{intro: Dyson1}
\\
d \lambda_2(t) =& d \beta_2(t) + \frac{\delta}{2(\lambda_2(t)-\lambda_1(t))} dt 
\label{intro: Dyson2}
\end{align}
for two independent Brownian motions $ \beta_1(t) $ and $ \beta_2(t) $. 
In particular, the process $ \Lambda^{1,\delta}(t) $ is Markov. 

\begin{Thm} \label{thm: Lambda}
The process $ \Lambda^{c,\delta} $ is Markov 
if and only if $ c \in \{0, 1\} $. 
\end{Thm}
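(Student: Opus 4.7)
Sufficiency is already established in the paper: for $c=0$ the process $R_t := (\lambda_1(t) - \lambda_2(t))/2$ reduces to $|D_t|$ with $D_t := (B_1(t) - B_2(t))/2$, a reflected Brownian motion; and for $c=1$, Lemma \ref{lem: Dyson} gives Markovianity of $\Lambda^{1,\delta}$. The task is therefore to prove non-Markovianity when $c \notin \{0,1\}$, which I will do in three steps: a reduction to a one-dimensional problem, a fourth-moment obstruction with respect to the ambient filtration, and a quadratic-variation argument promoting this to a genuine failure of Markovianity of $\Lambda^{c,\delta}$ itself.

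With $S_t := (B_1(t)+B_2(t))/2$ and $R_t := \sqrt{D_t^2 + (c/2)\xi^\delta(t)^2}$, the eigenvalues are $\lambda_{1,2}(t) = S_t \pm R_t$. Since $B_1+B_2$ is independent of $(B_1-B_2, \xi^\delta)$, the Brownian motion $S$ is independent of the process $R$, so $\Lambda^{c,\delta}$ is Markov with respect to its natural filtration if and only if $R$ (or equivalently $R^2$) is Markov with respect to $\cF^R$.

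Assume now $c \notin \{0,1\}$. Using the independence of $D$ and $\xi^\delta$, standard moment formulas for Brownian motion, and the additivity of squared Bessel processes, a direct calculation gives
\begin{equation*}
E[R_{s+h}^4 \mid \cF_s] = R_s^4 + h\,\Phi(D_s^2, \xi_s^2) + h^2 C, \qquad \Phi(x,y) := (3+c\delta)\,x + \tfrac{c}{2}\bigl(1+c(\delta+2)\bigr)\,y,
\end{equation*}
where $\cF$ is the natural filtration of $(B_1,B_2,\xi^\delta)$ and $C$ is a constant. Parametrizing the level set $\{x + (c/2)y = u\}$ by $x$ yields $\Phi = u(1+c(\delta+2)) + 2(1-c)x$, which depends nontrivially on $x$ because $c \neq 1$. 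Hence $\Phi(D_s^2,\xi_s^2)$ is not a function of $R_s^2$ alone, which already rules out Markovianity with respect to the ambient filtration $\cF$.

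To obtain the sharper statement for $\cF^R$ itself, I exploit quadratic variation. The semimartingale decomposition $dR_t^2 = \sqrt{2}\,D_t\,dW_1(t) + c\,\xi_t\,dW_2(t) + \tfrac{1+c\delta}{2}\,dt$ (with independent driving Brownian motions) gives $\vbra{R^2}_t = \int_0^t(2D_u^2 + c^2\xi_u^2)\,du$, and this quadratic variation is $\cF^R$-adapted as a pathwise limit of sums of squared increments of $R^2$. Continuity of the integrand then yields $2D_s^2 + c^2\xi_s^2 = \lim_{\epsilon \downarrow 0}\epsilon^{-1}\bigl(\vbra{R^2}_s - \vbra{R^2}_{s-\epsilon}\bigr) \in \cF_s^R$. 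Combined with $R_s^2 = D_s^2 + (c/2)\xi_s^2 \in \cF_s^R$ and the non-vanishing determinant $c^2 - c = c(c-1)$ of the associated $2\times 2$ linear system, both $D_s^2$ and $\xi_s^2$ are individually $\cF_s^R$-measurable. Markovianity of $R$ would then force the $\cF$-conditional expression $R_s^4 + h\,\Phi(D_s^2,\xi_s^2) + h^2 C$, which is now $\cF_s^R$-measurable, to coincide with some $g(R_s)$; since the joint law of $(D_s^2,\xi_s^2)$ has full support on $[0,\infty)^2$, this contradicts the level-set analysis above. The main obstacle is precisely this quadratic-variation step, i.e.\ the pointwise recovery of $(D_s^2,\xi_s^2)$ from $\cF_s^R$ via Lebesgue differentiation of $\vbra{R^2}$; the moment computation and the reduction are routine by comparison.
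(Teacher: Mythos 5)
Your proof is correct, but after the reduction step it departs substantially from the paper's route. The reduction itself is identical: both you and the paper observe that $\lambda_1+\lambda_2$ and $\lambda_1-\lambda_2$ are independent, so Markovianity of $\Lambda^{c,\delta}$ is equivalent to Markovianity of the one-dimensional process $R^2=D^2+(c/2)(\xi^\delta)^2$, a positive multiple of $X^1+cY^\delta$. From there the paper proves its Theorem \ref{thm: Bessel} analytically: it writes the conditional density of $Z^c(2)$ given $(Z^c(\eps),Z^c(1))$ as a ratio of explicit integrals of products of squared-Bessel transition densities and extracts, via a Laplace-type lemma and three nested limits ($\eps\to0+$, $z_3\to0+$, $z_2\to\infty$ along $z_1=z_2r$), an explicit factor $D(r)^{-\delta_1/2}$ depending on the past. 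You instead argue probabilistically: the germ of the quadratic variation $\vbra{R^2}_t=\int_0^t(2D_u^2+c^2\xi_u^2)\,du$ is $\cF^R$-adapted, and since the $2\times2$ system linking $(R_s^2,\tfrac{d}{ds}\vbra{R^2}_s)$ to $(D_s^2,\xi_s^2)$ has determinant $c(c-1)\neq0$, the pair $(D_s^2,\xi_s^2)$ is individually recoverable from $\cF_s^R$; the exact fourth-moment identity (which I have checked, including the constant $h^2$ term) then shows that $E[R_{s+h}^4\mid\cF_s^R]$ depends affinely and non-trivially on $D_s^2$ along each level set of $R_s^2$, contradicting the Markov property since the conditional law of $D_s^2$ given $R_s^2$ is non-degenerate. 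Your argument is shorter and more conceptual, isolates \emph{why} the Markov property fails (the past of $R$ reveals both coordinates, and the dynamics of $R^2$ see both), handles all $c>0$, $c\neq1$ at once without the paper's rescaling to $0<c<1$, and avoids all Bessel-function asymptotics; the paper's approach, in exchange, yields the more general Theorem \ref{thm: Bessel} for arbitrary dimensions $\delta_1,\delta_2>0$ together with quantitative information on how the conditional law depends on the past. Two minor points to address in a careful writeup: the quadratic variation is $\cF_s^R$-measurable only after completing the filtration, so you should note that the Markov property with respect to the raw and the completed natural filtrations are equivalent; and since $r^4$ is unbounded you should pass from bounded test functions to $R_{s+h}^4$ by monotone convergence. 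Neither affects the validity of the argument.
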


We prove this theorem by reducing it to the following. 

\begin{Thm} \label{thm: Bessel}
Let $ \delta_1,\delta_2>0 $. 
Let $ X^{\delta_1} $ and $ Y^{\delta_2} $ be two independent 
squared Bessel processes starting from 0 
of dimension $ \delta_1 $ and $ \delta_2 $, respectively. 
Then the process $ Z^c(t)=cX^{\delta_1}(t)+Y^{\delta_2}(t) $ for $ c \ge 0 $ is Markov 
if and only if $ c \in \{0, 1\} $. 
\end{Thm}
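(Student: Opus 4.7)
The ``if'' direction is classical: when $c=0$, $Z^0 = Y^{\delta_2}$ is itself a squared Bessel process and is therefore Markov; when $c=1$, the Shiga--Watanabe additivity property of squared Bessel processes asserts that $X^{\delta_1} + Y^{\delta_2}$ is again a squared Bessel process, of dimension $\delta_1 + \delta_2$, so the Markov property is inherited.

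For the ``only if'' direction I fix $c > 0$ with $c \neq 1$ and detect the failure of the Markov property through the quadratic variation of $Z^c$. From the It\^o equations $dX^{\delta_1} = 2\sqrt{X^{\delta_1}}\,dB_1 + \delta_1\,dt$ and $dY^{\delta_2} = 2\sqrt{Y^{\delta_2}}\,dB_2 + \delta_2\,dt$, with $B_1, B_2$ two independent Brownian motions, one gets
\[
  \langle Z^c \rangle_t = 4\int_0^t W_u\,du, \qquad W_u := c^2 X^{\delta_1}(u) + Y^{\delta_2}(u).
\]
Since $\langle Z^c\rangle$ is a pathwise functional of the continuous semimartingale $Z^c$, each $\langle Z^c\rangle_s$ is measurable with respect to $\cF_s^Z := \sigma(Z^c(u) : u \le s)$. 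By Lebesgue differentiation, for Lebesgue-a.e.\ $s > 0$ one has $W_s = \tfrac{1}{4}\tfrac{d}{ds}\langle Z^c\rangle_s$ almost surely, so $W_s$ is itself $\cF_s^Z$-measurable for such $s$.

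Next, using the martingale property of squared Bessel processes, $E[X^{\delta_1}(u)\mid \cF_s] = X^{\delta_1}(s) + \delta_1(u-s)$ and the analogous identity for $Y^{\delta_2}$, a direct computation gives
\[
  E[\langle Z^c\rangle_t - \langle Z^c\rangle_s \mid \cF_s^Z]
  = 4\,E[W_s \mid \cF_s^Z]\,(t-s) + 2(c^2\delta_1 + \delta_2)(t-s)^2.
\]
If $Z^c$ were Markov, then, since $\langle Z^c\rangle_t - \langle Z^c\rangle_s$ is $\sigma(Z^c(u): s \le u \le t)$-measurable, the left-hand side would be $\sigma(Z^c(s))$-measurable. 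This forces $E[W_s \mid \cF_s^Z]$, and hence (by the preceding paragraph) $W_s$ itself, to be a function of $Z^c(s)$. Writing $W_s - Z^c(s) = c(c-1)X^{\delta_1}(s)$ with $c(c-1) \neq 0$, this in turn would make $X^{\delta_1}(s)$ a function of $Z^c(s)$, contradicting the fact that $(X^{\delta_1}(s), Y^{\delta_2}(s))$ has a strictly positive joint density on $(0,\infty)^2$, which renders the conditional distribution of $X^{\delta_1}(s)$ given $Z^c(s) = z$ non-degenerate for every $z > 0$.

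The main obstacle I anticipate is the clean justification that $W_s$ is $\cF_s^Z$-measurable for a.e.\ $s$, rather than merely appearing ``up to conditioning''. This will require a pathwise construction of the quadratic variation, for instance as an almost-sure limit along a suitably chosen refining sequence of partitions, together with an application of Lebesgue differentiation. Once this measurability is in place, the remaining steps are just the two moment computations above and the full-support remark.
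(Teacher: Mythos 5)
Your argument is correct, but it takes a genuinely different — and considerably more elementary — route than the paper. The paper works with the explicit transition densities $p_t^{\delta}(x,y)$ of squared Bessel processes and shows, through a chain of asymptotic estimates (first $\eps \to 0+$, then $z_3 \to 0+$, then $z_2 \to \infty$ along $z_1 = z_2 r$), that the conditional density of $Z^c(2)$ given $(Z^c(\eps), Z^c(1))$ genuinely depends on the earlier observation; the dependence is isolated in the explicit factor $D(r)^{-\delta_1/2}$ with $D(r) = 1 + (1-c)/(1-c+\sqrt{r}\,c)$. You instead extract $W_s = c^2 X^{\delta_1}(s) + Y^{\delta_2}(s)$ from the past of $Z^c$ via the quadratic variation, note that together with $Z^c(s) = cX^{\delta_1}(s) + Y^{\delta_2}(s)$ this recovers $X^{\delta_1}(s)$ itself whenever $c(c-1) \neq 0$ (the $2\times 2$ linear system is nondegenerate), and then use the conditional mean of the future quadratic-variation increment to show that the Markov property would force $X^{\delta_1}(s)$ to be $\sigma(Z^c(s))$-measurable — impossible since $(X^{\delta_1}(s), Y^{\delta_2}(s))$ has a strictly positive product (gamma) density on $(0,\infty)^2$, so the conditional law of $X^{\delta_1}(s)$ given $Z^c(s)=z$ is absolutely continuous on $(0,z/c)$ and never a point mass. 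The measurability issue you flag is the only real technicality and is handled exactly as you suggest (quadratic variation as an a.s.\ limit along a subsequence of partition sums, so $\cF_s^Z$-measurable up to completion; then left difference quotients converge everywhere because $\langle Z^c\rangle$ is $C^1$ with derivative $4W$); you should also record the standard extension of the one-step Markov property to integrable functionals of the future $\sigma$-field, which is what lets you conclude that $E[\langle Z^c\rangle_t - \langle Z^c\rangle_s \mid \cF_s^Z]$ is $\sigma(Z^c(s))$-measurable. What the paper's computation buys that yours does not is an explicit, quantitative exhibition of the non-Markovianity at the level of finite-dimensional conditional laws; what yours buys is brevity, complete avoidance of Bessel-function asymptotics, and a structural explanation of the phenomenon: unless $c \in \{0,1\}$, the past of $Z^c$ already reveals the hidden pair $(X^{\delta_1}(s), Y^{\delta_2}(s))$, and the one-dimensional present $Z^c(s)$ cannot carry that information.
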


Theorems \ref{thm: Lambda} and \ref{thm: Bessel} 
seem similar to Matsumoto-Ogura's $ cM-X $ theorem \cite{MO04}. 
Let $ X $ be a Brownian motion 
and set $ M(t)=\sup_{0 \le s \le t} X(s) $. 
When $ c=0,1,2 $, the process $ cM-X $ is Markov; 
indeed, $ -X $ is a Brownian motion, 
$ M-X $ is a reflecting Brownian motion by L\'evy's theorem 
(see, e.g., \cite[Thm.VI.2.3]{RY99}), 
and $ 2M-X $ is a three-dimensional Bessel process by Pitman's theorem 
(see, e.g., \cite[Thm.VI.3.5]{RY99}). 

\begin{Thm}[\cite{MO04}]
The process $ cM-X $ is Markov if and only if $ c \in \{0, 1, 2\} $. 
\end{Thm}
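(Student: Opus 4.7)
The sufficiency direction is already covered by the three cited results: $c=0$ is trivial, $c=1$ is L\'evy's theorem, and $c=2$ is Pitman's theorem. For the necessity, I would assume $Z:=cM-X$ is Markov for some $c>0$ with $c\neq 1,2$ and derive a contradiction. The strategy is to compare, for a smooth test function $f$ supported in $(0,\infty)$, the It\^o decomposition of $f(Z_t)$ in the Brownian filtration $\cF^X$ (where both $X_t$ and $M_t$ are visible) with the decomposition implied by the Markov assumption in the natural filtration $\cF^Z$.

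In $\cF^X$, It\^o's formula gives
\[
f(Z_t)-f(0) = -\int_0^t f'(Z_s)\,dX_s + c\int_0^t f'(Z_s)\,dM_s + \tfrac12\int_0^t f''(Z_s)\,ds,
\]
and, since $M$ grows only on $\{X=M\}=\{Z=(c-1)M\}$, a change of variable converts the middle term into $c\int_0^{M_t} f'((c-1)m)\,dm$, a deterministic function of $M_t$. Brownian scaling $(Z_{at}/\sqrt a)_{t\ge 0}\law (Z_t)_{t\ge 0}$ together with $\vbra{Z}_t=t$ forces any Markov version of $Z$ to be a self-similar diffusion whose interior generator has the form $Lf=\tfrac12 f''+(K/z)f'$ for some constant $K$; in the regime $c>1$ considered here one has $Z_t\ge(c-1)M_t>0$ for all $t>0$, so no boundary behaviour at $0$ enters. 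Equating the expectations of the two decompositions of $f(Z_t)$ and matching the coefficients of $f'(z)$ in the resulting integral identity produces
\[
\frac{K}{z}\int_0^t p_s(z)\,ds = \frac{c}{c-1}\,\bP\bigl((c-1)M_t>z\bigr),\qquad z>0,\ t>0,
\]
where $p_s$ denotes the density of $Z_s$.

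Differentiating in $t$ and inserting the Gaussian expression $\partial_t\bP(M_t>m)=m(2\pi)^{-1/2}t^{-3/2}\e^{-m^2/(2t)}$ (reflection principle) gives $p_t(z)=C\,z^2 t^{-3/2}\e^{-z^2/(2(c-1)^2 t)}$ for some constant $C=C(c,K)$. Independently, $p_1$ can be computed directly by integrating out $m$ in the reflection-principle joint density of $(M_1,X_1)$: for $c\notin\{1,2\}$ this yields $p_1(z)=\frac{2}{\sqrt{2\pi}\,(2-c)}\bigl(\e^{-z^2/2}-\e^{-z^2/(2(c-1)^2)}\bigr)$ for $z>0$. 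Equating the two expressions and taking logarithms reduces the question to the elementary identity $\log(1+y/K)=y$ with $y=c(2-c)z^2/[2(c-1)^2]$, required to hold for all $z>0$; since $y$ sweeps out a nondegenerate interval, this forces $y\equiv 0$, hence $c(2-c)=0$, contradicting $c>0$ and $c\neq 2$. The case $0<c<1$ is handled by the analogous argument applied on each half-line of $\bR$. The main technical obstacle is the justification of the self-similar form of the generator in $\cF^Z$: this rests on the uniqueness of the Doob--Meyer decomposition of $Z$ combined with Brownian scaling, and explains as a byproduct why $c=1$ is Markov despite falling outside the above template, since there the would-be interior drift $K/z$ collapses into a boundary local-time term, producing reflecting Brownian motion.
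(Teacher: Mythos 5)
The paper does not actually prove this statement: it is imported wholesale from Matsumoto--Ogura \cite{MO04}, and only the sufficiency of $c\in\{0,1,2\}$ is discussed in the text (trivially, via L\'evy's theorem, via Pitman's theorem). So your proposal has to be judged on its own merits and against the strategy the paper uses for its analogous Theorem \ref{thm: Bessel}, which --- like Matsumoto--Ogura's actual argument --- is to compute a conditional law given the values at \emph{two} past times and exhibit genuine dependence on the earlier one.

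Your necessity argument has a real gap at its pivotal step: the claim that a Markov version of $Z=cM-X$ must be a time-homogeneous diffusion with drift $K/z$ for a \emph{constant} $K$. The Markov property being denied here is the general (possibly time-inhomogeneous) one, and Brownian scaling only forces the drift of a Markov $Z$ into the form $b(t,z)=t^{-1/2}\beta(z/\sqrt{t})$ for some unknown function $\beta$; constancy of $K$ would require time-homogeneity, which nothing supplies. With the weaker form, your matching-of-expectations identity, after differentiating in $t$ and inserting $p_t(z)=t^{-1/2}p_1(z/\sqrt{t})$, reduces to $\beta(w)p_1(w)=\frac{c}{(c-1)^2\sqrt{2\pi}}\,w\,e^{-w^2/(2(c-1)^2)}$, which simply \emph{defines} $\beta$ and yields no contradiction. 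Worse, the quantity $b(t,z)=\lim_{h\to 0}h^{-1}E[Z_{t+h}-Z_t\mid Z_t=z]$ is computable from the two-dimensional marginals alone and exists whether or not $Z$ is Markov, so a scheme that only ever conditions on a single time cannot, even in principle, detect non-Markovianity: the entire contradiction you obtain is manufactured by the unjustified assumption $b=K/z$. Two further gaps: passing from ``Markov'' to ``diffusion whose generator acts on test functions as $\tfrac12 f''+bf'$'' needs strong Markov plus regularity, which you do not establish; and the case $0<c<1$, where $Z$ changes sign and the two half-lines communicate through $0$, is not actually handled by ``the analogous argument on each half-line.'' Your endgame (the explicit $p_1(z)=\frac{2}{\sqrt{2\pi}(2-c)}(e^{-z^2/2}-e^{-z^2/(2(c-1)^2)})$ and the convexity contradiction $e^{y}=1+C''y$) is computed correctly, but it rests on a foundation that is not there. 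A repair must condition on at least two past times --- which is exactly what the paper does in Lemmas \ref{lem: lemma1}--\ref{lem: lemma3} for Theorem \ref{thm: Bessel}, and what \cite{MO04} do for this statement.
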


\section{Non-Markov property of the eigenvalue processes}

\begin{proof}[Proof of Theorem \ref{thm: Lambda} provided Theorem \ref{thm: Bessel} is justified]

An elementary calculation shows that $ \lambda_1 $ and $ \lambda_2 $ are given by 
\begin{align*}
\lambda_1(t) =& 
\frac{1}{2} \kbra{ B_1(t) + B_2(t) + \sqrt{ (B_1(t) - B_2(t))^2 + 2c \xi^{\delta}(t)^2 } } , 
\\
\lambda_2(t) =& 
\frac{1}{2} \kbra{ B_1(t) + B_2(t) - \sqrt{ (B_1(t) - B_2(t))^2 + 2c \xi^{\delta}(t)^2 } } . 
\end{align*}
Set $ B_3(t) = \{ B_1(t) + B_2(t) \}/\sqrt{2} $, 
$ X^1(t) = \{ B_1(t) - B_2(t) \}^2/2 $ 
and $ Y^{\delta}(t) = \xi^{\delta}(t)^2 $. 
Then $ B_3 $ is a real Brownian motion, 
$ X^1 $ and $ Y^{\delta} $ are squared Bessel processes of dimension 1 and $ \delta $, 
respectively. 
Moreover, $ B_3 $, $ X^1 $, and $ Y^{\delta} $ are mutually independent. 
It follows that 
\begin{align*}
\lambda_1(t) =& 
\frac{1}{\sqrt{2}} \kbra{ B_3 + \sqrt{ X^1(t) + c Y^{\delta}(t) } } , 
\\
\lambda_2(t) =& 
\frac{1}{\sqrt{2}} \kbra{ B_3 - \sqrt{ X^1(t) + c Y^{\delta}(t) } } . 
\end{align*}

It is obvious that 
the two dimensional process $ \Lambda^{c,\delta} = (\lambda_1,\lambda_2) $ is Markov 
if and only if so is the process $ (\lambda_1+\lambda_2,\lambda_1-\lambda_2) $. 
Since 
\begin{align}
\lambda_1 + \lambda_2 =& \sqrt{2} B_3, 
\label{prf Lambda: 1} \\
\lambda_1-\lambda_2 =& \sqrt{2} \sqrt{ X^1 + c Y^{\delta} } 
\label{prf Lambda: 2}
\end{align}
and they are independent, 
for the process $ \Lambda^{c,\delta} $ to be Markov 
it is necessary and sufficient that 
the process $ X^1 + c Y^{\delta} $ is Markov. 
This is equivalent to $ c=0 $ or $ 1 $ by Theorem \ref{thm: Bessel}. 
\end{proof}

\begin{Lem} \label{lem: Dyson}
For $ c=1 $ and $ \delta > 0 $, 
consider the $ 2 \times 2 $-matrix-valued process $ \Xi^{1,\delta} $ 
defined by \eqref{intro: Xi}. 
Then the corresponding eigenvalue processes satisfy 
the stochastic differential equations 
\eqref{intro: Dyson1}--\eqref{intro: Dyson2}. 
\end{Lem}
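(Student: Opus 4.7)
My plan is to apply It\^o's formula directly to the explicit expressions for the eigenvalues obtained in the proof of Theorem~\ref{thm: Lambda}. With $c=1$, set
\[
S = \lambda_1+\lambda_2 = B_1+B_2,\qquad R = \lambda_1-\lambda_2 = \sqrt{U^2+2Y},
\]
where $U=B_1-B_2$ and $Y=(\xi^\delta)^2$. Because $\lambda_1=(S+R)/2$ and $\lambda_2=(S-R)/2$, it suffices to produce semimartingale decompositions of $S$ and $R$ driven by independent standard Brownian motions.

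The decomposition $dS=\sqrt{2}\,d\beta^{**}$, with $\beta^{**}:=S/\sqrt{2}$ a standard Brownian motion, is immediate. For $R$, I would first compute $d(R^2)$ from $d(U^2)=2U(dB_1-dB_2)+2\,dt$ and the squared-Bessel SDE $dY=\delta\,dt+2\sqrt{Y}\,dw$ (where $w$ is the Brownian motion driving $\xi^\delta$, independent of $B_1,B_2$), obtaining
\[
d(R^2)=2U(dB_1-dB_2)+4\sqrt{Y}\,dw+2(1+\delta)\,dt,\qquad d\langle R^2\rangle_t=(8U^2+16Y)\,dt=8R^2\,dt.
\]
It\^o's formula applied to $R=\sqrt{R^2}$ then yields
\[
dR=\underbrace{\frac{U(dB_1-dB_2)+2\sqrt{Y}\,dw}{R}}_{=:dM}+\frac{\delta}{R}\,dt,
\]
and the continuous local martingale $M$ satisfies $d\langle M\rangle_t=\tfrac{2U^2+4Y}{R^2}\,dt=2\,dt$. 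By L\'evy's characterization, $\beta^*:=M/\sqrt{2}$ is a standard Brownian motion; moreover $d\langle M,S\rangle_t=\tfrac{U}{R}(d\langle B_1\rangle-d\langle B_2\rangle)=0$, so the pair $(\beta^*,\beta^{**})$ is a two-dimensional Brownian motion and in particular its components are independent.

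Setting $\beta_1=(\beta^{**}+\beta^*)/\sqrt{2}$ and $\beta_2=(\beta^{**}-\beta^*)/\sqrt{2}$ produces two independent standard Brownian motions, and the identities $d\lambda_{1,2}=\tfrac{1}{2}(dS\pm dR)$ yield
\[
d\lambda_1=d\beta_1+\frac{\delta}{2(\lambda_1-\lambda_2)}\,dt,\qquad d\lambda_2=d\beta_2+\frac{\delta}{2(\lambda_2-\lambda_1)}\,dt,
\]
which are exactly \eqref{intro: Dyson1}--\eqref{intro: Dyson2}. The main subtlety is that the It\^o computation for $R=\sqrt{R^2}$ is only formally valid on $\{R>0\}$. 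This is harmless because, by the additivity of squared Bessel processes, $R^2/2=X^1+Y^\delta$ is a squared Bessel process of dimension $1+\delta>1$ (equivalently $R/\sqrt{2}$ is a Bessel process of dimension $1+\delta$); for $\delta>0$ its zero set has Lebesgue measure zero and the drift $\int_0^t\delta/R_s\,ds$ is finite almost surely. The derivation is justified either by applying It\^o to $\sqrt{R^2+\varepsilon}$ and passing to the limit $\varepsilon\downarrow 0$, or by invoking the standard SDE characterization of Bessel processes.
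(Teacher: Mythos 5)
Your proof is correct, and it arrives at the same structural decomposition as the paper --- writing $\lambda_{1,2}=(S\pm R)/2$ with $S$ and $R$ independent and then rotating the driving noises via $\beta_{1,2}=(\beta^{**}\pm\beta^{*})/\sqrt{2}$ --- but the central step is handled differently. The paper identifies $(\lambda_1-\lambda_2)/\sqrt{2}=\sqrt{X^1+Y^{\delta}}$ as a Bessel process of dimension $1+\delta$ in one stroke via Shiga--Watanabe's additivity theorem for squared Bessel processes (\cite[Thm.XI.1.2]{RY99}) applied to \eqref{prf Lambda: 2}, and then simply quotes the canonical SDE for such a Bessel process; independence of the two driving Brownian motions is immediate there because $R$ is a functional of $(X^1,Y^{\delta})$, which is independent of $B_3$. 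You instead derive the SDE for $R$ by a direct It\^o computation (your algebra checks out: $d\langle R^2\rangle=8R^2\,dt$ and $d\langle M\rangle=2\,dt$) and establish independence through the vanishing cross-variation $\langle M,S\rangle=0$ together with L\'evy's characterization. Your route is more self-contained and makes explicit the cancellation that produces a constant quadratic variation, but it forces you to confront the singularity at $R=0$; you resolve this correctly, and notably by falling back on the very same additivity theorem, to recognize $R^2/2$ as a squared Bessel process of dimension $1+\delta>1$ so that the drift integral is a.s.\ finite. The paper's argument absorbs that subtlety into the standard theory of Bessel processes of dimension greater than one. Both proofs are sound; yours trades one citation for a computation plus a regularization argument.
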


\begin{proof}
Set $ \tilde{\lambda} = (\lambda_1-\lambda_2)/\sqrt{2} $. 
Then, by \eqref{prf Lambda: 2} for $ c=1 $ and by Shiga-Watanabe's theorem 
(see, e.g., \cite[Thm.XI.1.2]{RY99}), 
we see that the process $ \tilde{\lambda} $ is a Bessel process of dimension $ 1+\delta $. 
Hence we have 
\begin{align}
d \tilde{\lambda}(t) = 
d B_4(t) + \frac{\delta}{2} \frac{1}{\tilde{\lambda}(t)} dt 
\label{prf Lambda: 3}
\end{align}
where $ B_4 $ is a real Brownian motion independent of $ B_3 $. 
If we set $ \beta_1 = (B_3+B_4)/\sqrt{2} $ and $ \beta_2 = (B_3-B_4)/\sqrt{2} $, 
then $ \beta_1 $ and $ \beta_2 $ are two independent real Brownian motions. 
Therefore, combining \eqref{prf Lambda: 3} with \eqref{prf Lambda: 1}, 
we conclude that 
\eqref{intro: Dyson1}--\eqref{intro: Dyson2} hold. 
\end{proof}

\begin{Lem} \label{lem: GOE}
Let $ c>0 $, $ \delta=1, 2$, or $4$, and 
$\Xi^{c,\delta}$ and $\tilde{\Xi}^{c,\delta}$ be the matrix-valued processes 
defined by \eqref{intro: Xi} and \eqref{cDyson}, respectively.  
Then, there exists a unitary matrix-valued process $U_{\delta}(t)$ such that 
\begin{align*}
\Bigl(\Xi^{c,\delta}(t)\Bigr)_{t \ge 0} \stackrel{\rm law}{=} 
\left(U_{\delta}(t) \tilde{\Xi}^{c,\delta}(t) U_{\delta}^*(t)\right)_{t \ge 0}. 
\end{align*}
In particular, eigenvalue processes associated with 
$\Xi^{c,\delta}$ and $\tilde{\Xi}^{c,\delta}$ have the same law. 
\end{Lem}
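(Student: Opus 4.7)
The plan is to construct $U_\delta(t)$ as a diagonal unitary process depending only on the off-diagonal Brownian motion $B_3$ appearing in $\tilde{\Xi}^{c,\delta}$, chosen so that the conjugation replaces the $(1,2)$-entry $B_3(t)$ by its modulus $|B_3(t)|$. The key input is the elementary identification: for $\delta\in\{1,2,4\}$, the modulus of a $\delta$-dimensional Brownian motion starting at $0$ is a Bessel process of dimension $\delta$, where $B_3$ is regarded as a standard Brownian motion in $\bR$, $\bC\cong\bR^2$, or $\bH\cong\bR^4$, respectively.

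I would first fix a pathwise polar decomposition $B_3(t)=|B_3(t)|\,\Theta(t)$, with $\Theta(t)$ taking values in the unit sphere $\{\pm1\}$, $S^1$, or $S^3$ of the corresponding division algebra, and defined arbitrarily on the a.s.\ negligible set $\{B_3(t)=0\}$. I would then set $U_\delta(t):=\mathrm{diag}(1,\Theta(t))$, which is unitary because $|\Theta(t)|=1$. Using $\Theta\,\overline{\Theta}=1$ and the fact that $B_1,B_2$ are real and thus commute with $\Theta$, a direct computation gives
\begin{align*}
U_\delta(t)\,\tilde{\Xi}^{c,\delta}(t)\,U_\delta(t)^{*}
=\begin{pmatrix}
B_1(t) & \sqrt{c/2}\,|B_3(t)| \\
\sqrt{c/2}\,|B_3(t)| & B_2(t)
\end{pmatrix}.
\end{align*}

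Because $\Theta$ is a measurable functional of $B_3$ alone, the process $|B_3|$ remains independent of $(B_1,B_2)$, and by the identification mentioned above it has the law of a Bessel process of dimension $\delta$ started from $0$. Hence the right-hand side displayed above coincides in law with $\Xi^{c,\delta}$ of \eqref{intro: Xi}. The ``in particular'' statement follows at once because unitary conjugation preserves the spectrum pathwise, so the eigenvalue processes of $\tilde{\Xi}^{c,\delta}$ and of $U_\delta\tilde{\Xi}^{c,\delta}U_\delta^{*}$ agree sample-path by sample-path.

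The main point requiring care is the case $\delta=4$: since $\bH$ is non-commutative, one must watch the side on which $\Theta$ and $\overline{\Theta}$ act during the conjugation, and verify that $U_4$ truly represents a unitary in the quaternionic sense. This is most cleanly handled by passing to the standard faithful representation of $2\times2$ quaternion-Hermitian matrices as $4\times 4$ complex Hermitian matrices, after which $U_4$ becomes an honest complex unitary; the cases $\delta=1,2$ are essentially formal since the relevant algebras are commutative.
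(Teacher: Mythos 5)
Your proposal is correct and follows essentially the same route as the paper: both define the diagonal unitary $U_\delta(t)=\mathrm{diag}(1,\Theta(t))$ with $\Theta(t)=B_3(t)/|B_3(t)|$ (extended arbitrarily on the null set where $B_3(t)=0$), compute that conjugation replaces the off-diagonal entry by $\sqrt{c/2}\,|B_3(t)|$, and conclude via $|B_3|\stackrel{\rm law}{=}\xi^{\delta}$. Your extra remarks on the quaternionic case and on the preservation of independence are sound elaborations of points the paper leaves implicit.
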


\begin{proof}
We define 
\begin{align*}
U_{\delta}(t) = 
\begin{pmatrix}
1 & 0 \\
0 & \frac{B_3(t)}{|B_3(t)|} 
\end{pmatrix}
1_{B_3(t) \neq 0}+ 
\begin{pmatrix}
1 & 0 \\
0 & 1 
\end{pmatrix}
1_{B_3(t) = 0} 
\end{align*}
by using $ B_3 $ in \eqref{cDyson}. 
Then we have  
\begin{align*}
U_{\delta}(t) \tilde{\Xi}^{c,\delta}(t) U_{\delta}^*(t) = 
\begin{pmatrix}
B_1(t) & \sqrt{c/2} \, |B_3(t)| \\ 
\sqrt{c/2} \, |B_3(t)|  & B_2(t) 
\end{pmatrix}, 
\end{align*}
which shows the desired result since $|B_3| \stackrel{\rm law}{=} \xi^{\delta}$. 
\end{proof}

\section{Transition probability density of squared Bessel processes}
In this section, we recall some basic asymptotic estimates on the transition 
probability density $ p_t^{\delta}(x,y) $ of squared Bessel processes 
of dimension $ \delta $ which we shall use later. 
We first note that it has an expression 
\begin{align}
p_t^{\delta}(x,y) = \frac{1}{2t} \cbra{ \frac{y}{x} }^{(\delta-2)/4} 
\exp \cbra{- \frac{x+y}{2t}} I_{(\delta-2)/2} \cbra{\frac{\sqrt{xy}}{t}}
\label{density}  
\end{align}
for $x,y>0$, where $ I_{\nu} $ stands for the modified Bessel function of index $ \nu $ 
(see, e.g., \cite[Cor.XI.1.4]{RY99}). 
Now let us recall following two asymptotic estimates on 
the modified Bessel function (see, e.g., Sect.\ 5.16.4 of \cite{Leb72}): 
\begin{align}
&I_{\nu}(x) \sim \frac{1}{\Gamma(\nu+1) } \Bigl( \frac{x}{2} \Bigr) ^{\nu} 
\quad \text{as $ x \downarrow 0$}, \label{small}\\
&I_{\nu}(x) \sim \frac{e^{x}}{\sqrt{2 \pi x}} 
\quad \text{as $ x \uparrow \infty $}. \label{large}
\end{align}
Here, $ f(x) \sim g(x) $ means $ f(x)/g(x) \to 1 $ 
in the subsequently indicated limit. 

Using \eqref{small} in \eqref{density}, we can derive 
\begin{align}
p_t^{\delta}(0+,y) = 
\frac{y^{(\delta/2)-1}}{(2t)^{\delta/2} \Gamma (\delta/2)} 
\exp \cbra{- \frac{y}{2t}} \label{from0}
\end{align}
for $t, y > 0$ and 
\begin{equation}
\begin{split}
\lim_{y \to 0+} y^{1-\delta/2} p_t^{\delta}(x,y)
& = x^{1-\delta/2} p_t^{\delta}(0+,x)\\
& = \frac{1}{(2t)^{\delta/2} \Gamma (\delta/2)} 
\exp \cbra{- \frac{x}{2t}} \label{to0}  
\end{split}
\end{equation}
for $t, x > 0$. 
On the other hand \eqref{large} together with \eqref{density} yields
\begin{align}
p_t^{\delta}(x,y) \sim \frac{1}{2 t \sqrt{2\pi}} \frac{y^{(\delta-3)/4}}{x^{(\delta-1)/4}} 
\exp \cbra{- \frac{x+y-2\sqrt{xy}}{2t} }\label{far} 
\end{align}
as $ \sqrt{xy} \to \infty $.

\section{Non-Markov property of weighted sums of two independent squared Bessel processes}
For the proof of Theorem \ref{thm: Bessel}, we may restrict ourselves to 
$ 0<c<1 $; otherwise consider $ Z^c/c $ instead. 
We prove that $ Z^c $ is non-Markov by checking that 
the conditional law 
\begin{align}
P \cbra{ Z^c(2) \in dz_3 \mid Z^c(\eps)=z_1 , \ Z^c(1)=z_2 } \quad \text{for }  0<\eps<1   
\label{prf Bes: cond trans prob}
\end{align}
does depend on $ (\eps,z_1) $. 
This conditional law has the density 
\begin{align*}
P \cbra{ Z^c(2) \in dz_3 \mid Z^c(\eps)=z_1 , \ Z^c(1) =z_2 } 
= \frac{q(z_2,z_3;\eps,z_1)}{q(z_2;\eps,z_1)} dz_3, 
\end{align*}
where $q(z_2,z_3;\eps,z_1)$ and $q(z_2;\eps,z_1)$ are the densities of 
the joint laws of $ (Z^c(\eps),Z^c(1),Z^c(2)) $ and $ (Z^c(\eps),Z^c(1)) $, 
respectively. Thus it suffices to prove that the fraction 
${q(z_2,z_3;\eps,z_1)}/{q(z_2;\eps,z_1)}$ depends on $(\eps, z_1)$. 

To this end, we shall use the integral expression 
\begin{align*}
q(z_2,z_3;\eps,z_1) 
&= \int_0^{z_1} dx_1 \int_0^{z_2} dx_2 \int_0^{z_3} dx_3 A_{1,1} A_{1,2} A_{1,3},\\ 
q(z_2;\eps,z_1) 
&= \int_0^{z_1} dx_1 \int_0^{z_2} dx_2 A_{1,1} A_{1,2}, 
\end{align*}
where 
\begin{align*}
A_{1,1} =& p_{\eps}^{\delta_1}(0+,x_1) p_{\eps}^{\delta_2}(0+,z_1-cx_1) , 
\\
A_{1,2} =& p_{1-\eps}^{\delta_1}(x_1,x_2) p_{1-\eps}^{\delta_2}(z_1-cx_1,z_2-cx_2) , 
\\
A_{1,3} =& p_{1}^{\delta_1}(x_2,x_3) p_{1}^{\delta_2}(z_2-cx_2,z_3-cx_3) . 
\end{align*}

We divide the proof into several steps. 
First of all, we prove 

\begin{Lem} \label{prf Bessel: asymp}
Let $ f(\lambda,\cdot) $ for $ \lambda>0 $ be a bounded measurable function on $ (0,1) $. 
Suppose that $ f(\lambda,x/\lambda) $ converges to a constant $ f(\infty ,0) $ for any $ x \in (0,1) $ as $ \lambda \to \infty $. 
Let $ \phi \in C^1((0,1)) $ and suppose that $ \phi(0+)=a \in \bR $, $ \phi'(0+)=b>0 $ 
and $ \phi'(x)>0 $ for $ x \in (0,1) $. 
Let $ \nu > 0 $. 
Then 
\begin{align}
\int_0^1 e^{- \lambda \phi(x)} f(\lambda,x) x^{\nu-1} dx 
\sim f(\infty ,0) \frac{\Gamma (\nu ) }{b^{\nu } } 
\lambda^{- \nu } e^{- a \lambda} 
\quad \text{as $ \lambda \to \infty $}. \label{Lem 0}
\end{align}
\end{Lem}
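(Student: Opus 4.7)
This is a Laplace-type estimate: since $\phi'>0$ on $(0,1)$ and $\phi(0+)=a$, the phase $\phi$ is strictly increasing with $\phi(x)>a$ on $(0,1)$, so the exponential weight $e^{-\lambda\phi(x)}$ will concentrate the mass near $x=0$, and near the origin the approximation $\phi(x)\approx a+bx$ is what should produce the prefactor $\Gamma(\nu)/b^\nu$. The plan is therefore to rescale $y=\lambda x$ and apply dominated convergence.

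I would begin by setting $\psi(x):=\phi(x)-a$. Since $\phi'$ extends continuously to $[0,1)$ by $\phi'(0+)=b$, one has $\psi(x)=\int_0^x\phi'(t)\,dt$ and in particular $\psi(x)/x\to b$ as $x\to 0+$; hence I can fix $\delta_0\in(0,1)$ small enough that $(b/2)x\le\psi(x)\le(3b/2)x$ for every $x\in(0,\delta_0]$. The tail contribution from $[\delta_0,1]$ will be exponentially negligible, since boundedness of $f$ together with $\phi(x)\ge\phi(\delta_0)=a+\psi(\delta_0)>a$ yields
\begin{equation*}
\left|\int_{\delta_0}^1 e^{-\lambda\phi(x)}\,f(\lambda,x)\,x^{\nu-1}\,dx\right|
\;\le\;\frac{\|f\|_\infty}{\nu}\,e^{-\lambda\phi(\delta_0)}
\;=\;o\!\left(\lambda^{-\nu}e^{-a\lambda}\right).
\end{equation*}
For the remaining integral over $(0,\delta_0)$, the substitution $y=\lambda x$ rewrites it as
\begin{equation*}
\lambda^{-\nu}e^{-a\lambda}\int_0^{\lambda\delta_0}e^{-\lambda\psi(y/\lambda)}\,f(\lambda,y/\lambda)\,y^{\nu-1}\,dy.
\end{equation*}
The inequality $\lambda\psi(y/\lambda)\ge(b/2)y$ (valid on $(0,\lambda\delta_0]$ by the choice of $\delta_0$) shows that the integrand, extended by $0$ beyond $\lambda\delta_0$, is dominated uniformly in $\lambda$ by $\|f\|_\infty\,e^{-(b/2)y}\,y^{\nu-1}$, which is integrable on $(0,\infty)$; pointwise in $y>0$ one has $\lambda\psi(y/\lambda)\to by$ together with $f(\lambda,y/\lambda)\to f(\infty,0)$ by hypothesis. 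Dominated convergence therefore provides
\begin{equation*}
\int_0^{\lambda\delta_0}e^{-\lambda\psi(y/\lambda)}\,f(\lambda,y/\lambda)\,y^{\nu-1}\,dy
\;\longrightarrow\;f(\infty,0)\int_0^\infty e^{-by}\,y^{\nu-1}\,dy
\;=\;f(\infty,0)\,\frac{\Gamma(\nu)}{b^\nu},
\end{equation*}
which combined with the tail estimate will give \eqref{Lem 0}.

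The argument is routine; the only delicate point is the two-sided linear control $\psi(x)\asymp bx$ near $0$: the lower half supplies the $\lambda$-independent integrable majorant, while the limit $\psi(x)/x\to b$ (a direct consequence of $\phi'(0+)=b$) identifies the limiting phase $by$ and hence the Gamma-integral factor $\Gamma(\nu)/b^\nu$.
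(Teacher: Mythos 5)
Your proof is correct and follows essentially the same route as the paper: substitute $u=\lambda x$ and apply dominated convergence with an integrable exponential majorant of the form $\|f\|_\infty e^{-Ku}u^{\nu-1}$. The only cosmetic difference is that you split off the tail $[\delta_0,1]$ and use a two-sided linear bound on $\psi(x)=\phi(x)-a$ near $0$, whereas the paper gets the majorant in one step from the global bound $K=\inf_{x\in(0,1)}\{\phi(x)-\phi(0+)\}/x>0$, which makes the splitting unnecessary.
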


\begin{proof}
Changing variables to $ u=\lambda x $, we find that the left hand side
of \eqref{Lem 0} equals  
\begin{align*}
\lambda^{-\nu} e^{- a \lambda} 
\int_0^{\lambda} e^{- \lambda \{ \phi(u/\lambda) -a \}} f(\lambda,u/\lambda) du . 
\end{align*}
Note that $ \lambda \{ \phi(u/\lambda) -a \} \ge K u $ 
for $ u \in (0,\lambda) $ and $ \lambda>0 $ 
where $ K = \inf_{x \in (0,1)} \{ \phi(x)-\phi(0+) \}/x > 0 $. 
Hence we see that 
\begin{align*}
\lim_{\lambda \to \infty } 
\int_0^{\lambda} e^{- \lambda \{ \phi(u/\lambda) -a \}} f(\lambda,u/\lambda) du 
= f(\infty ,0) \int_0^{\infty } e^{- bu} u^{\nu -1} du 
\end{align*}
by the dominated convergence theorem. 
\end{proof}

Second, we take the limit as $ \eps \to 0 $. 

\begin{Lem} \label{lem: lemma1}
\begin{align*}
\lim_{\eps \to 0+} \frac{q(z_2,z_3;\eps,z_1)}{q(z_2;\eps,z_1)} 
= \frac{q(z_2,z_3;z_1)}{q(z_2;z_1)} 
\end{align*}
with 
\begin{align*}
q(z_2,z_3;z_1) = \int_0^{z_2} dx_2 \int_0^{z_3} dx_3 A_{2,1} A_{2,2} , 
\quad 
q(z_2;z_1) = \int_0^{z_2} dx_2 A_{2,1} 
\end{align*}
where $ A_{2,2}=A_{1,3} $ and 
\begin{align*}
A_{2,1} = \left. \Big. A_{1,2} \right|_{\eps \to 0+, \, x_1 \to 0+}  = 
p_{1}^{\delta_1}(0+,x_2) p_{1}^{\delta_2}(z_1,z_2-cx_2) . 
\end{align*}
\end{Lem}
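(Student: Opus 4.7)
The plan is to apply the Laplace-type Lemma \ref{prf Bessel: asymp} to the $x_1$-integrals in both the numerator $q(z_2,z_3;\eps,z_1)$ and the denominator $q(z_2;\eps,z_1)$, and to identify the common asymptotic prefactor that cancels in the quotient. Concretely, inserting \eqref{from0} gives
\begin{align*}
A_{1,1}=\frac{x_1^{\delta_1/2-1}(z_1-cx_1)^{\delta_2/2-1}}{(2\eps)^{(\delta_1+\delta_2)/2}\Gamma(\delta_1/2)\Gamma(\delta_2/2)}\exp\!\Bigl(-\frac{z_1+(1-c)x_1}{2\eps}\Bigr),
\end{align*}
and, because $0<c<1$, the factor $e^{-(1-c)x_1/(2\eps)}$ forces $x_1$ toward $0$ as $\eps\downarrow 0$. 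After the rescaling $u=x_1/z_1\in(0,1)$ and the identification $\lambda=1/(2\eps)$, the $x_1$-integrals fit the template of Lemma \ref{prf Bessel: asymp} with $\phi(u)=(1-c)z_1 u$, $a=0$, $b=(1-c)z_1$ and $\nu=\delta_1/2$; note that the companion factor $(1-cu)^{\delta_2/2-1}$ is bounded on $(0,1)$ because $1-cu\ge 1-c>0$.

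Next I would introduce the inner integrals
\begin{align*}
G_1(x_1,\eps):=\int_0^{z_2}dx_2\int_0^{z_3}dx_3\,A_{1,2}A_{1,3},\qquad
G_2(x_1,\eps):=\int_0^{z_2}dx_2\,A_{1,2},
\end{align*}
so that $q(z_2,z_3;\eps,z_1)=\int_0^{z_1}A_{1,1}G_1\,dx_1$ and $q(z_2;\eps,z_1)=\int_0^{z_1}A_{1,1}G_2\,dx_1$, and establish the joint continuity
\begin{align*}
G_1(x_1,\eps)\longrightarrow q(z_2,z_3;z_1),\qquad G_2(x_1,\eps)\longrightarrow q(z_2;z_1)\quad\text{as }(x_1,\eps)\to(0+,0+).
\end{align*}
Pointwise this reduces to $p_{1-\eps}^{\delta_i}(x_1,x_2)\to p_1^{\delta_i}(0+,x_2)$, which follows from joint continuity of the squared Bessel semigroup together with \eqref{small} applied in \eqref{density}. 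The exchange of limit and inner integrals is by dominated convergence, using \eqref{density} and \eqref{far} to construct an integrable majorant in $(x_2,x_3)$. The limiting values are exactly $q(z_2,z_3;z_1)$ and $q(z_2;z_1)$ by the definitions $A_{2,1}=p_1^{\delta_1}(0+,x_2)p_1^{\delta_2}(z_1,z_2-cx_2)$ and $A_{2,2}=A_{1,3}$.

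Finally, I would apply Lemma \ref{prf Bessel: asymp} to each of the two rescaled integrals with $f_\bullet(\lambda,u)=(1-cu)^{\delta_2/2-1}G_\bullet(z_1u,1/(2\lambda))$ for $\bullet\in\{1,2\}$; the joint continuity just established provides the hypothesis $f_\bullet(\lambda,u/\lambda)\to G_\bullet(0+,0+)$ (since $(1-cu/\lambda)^{\delta_2/2-1}\to 1$). The lemma then outputs an asymptotic of the form $C(\lambda,z_1,c,\delta_1,\delta_2)\cdot G_\bullet(0+,0+)$, with the same prefactor $C$ in both cases, and the prefactor cancels in the quotient to leave $G_1(0+,0+)/G_2(0+,0+)=q(z_2,z_3;z_1)/q(z_2;z_1)$, as claimed.

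The main obstacle I foresee is the joint continuity of $G_1$ and $G_2$ at $(0+,0+)$: the limit density $p_1^{\delta_i}(0+,y)$ has a $y^{\delta_i/2-1}$ singularity at $y=0$ when $\delta_i<2$, so the integrable majorant required for dominated convergence must be chosen with care near the boundary. Once the majorant is in hand, the invocation of the asymptotic lemma and the cancellation in the ratio are routine.
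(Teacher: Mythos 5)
Your proposal follows essentially the same route as the paper: extract the factor $x_1^{\delta_1/2-1}e^{-(1-c)x_1/(2\eps)}$ from $A_{1,1}$ via \eqref{from0}, apply the Laplace-type Lemma \ref{prf Bessel: asymp} to both numerator and denominator, and cancel the common asymptotic prefactor, reducing the claim to continuity of the inner integrals at $(x_1,\eps)=(0+,0+)$. The only difference is that you spell out the dominated-convergence justification of that continuity (and flag the boundary singularity when $\delta_i<2$), which the paper merely asserts.
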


\begin{proof}
We know that 
\begin{align*}
A_{1,1} = \frac{(x_1)^{(\delta_1/2)-1}(z_1-cx_1)^{(\delta_2/2)-1}}
{(2\eps)^{(\delta_1+\delta_2)/2} \Gamma (\delta_1/2) \Gamma (\delta_2/2)} 
\exp \cbra{ - \frac{1}{2\eps} \kbra{z_1 + (1-c)x_1} } 
\end{align*}
from \eqref{from0}. 
Now we can rewrite $ q(z_2,z_3;\eps,z_1) / q(z_2;\eps,z_1) $ as $ F_1/G_1 $ with 
\begin{align}
F_1 =& \int_0^{z_1} A_{1,4}(\eps,x_1) x_1^{(\delta_1/2)-1} e^{ - (\tilde{c}/\eps)x_1 } dx_1 
\label{prf Bessel: F3}
\\
G_1 =& \int_0^{z_1} A_{1,5}(\eps,x_1) x_1^{(\delta_1/2)-1} e^{ - (\tilde{c}/\eps)x_1 } dx_1 
\label{prf Bessel: G3}
\end{align}
where $ \tilde{c}=(1-c)/2 $ and 
\begin{align*}
A_{1,4}(\eps,x_1) =& (z_1-cx_1)^{(\delta_2/2)-1} 
\int_0^{z_2} dx_2 \int_0^{z_3} dx_3 A_{1,2} A_{1,3} , 
\\
A_{1,5}(\eps,x_1) =& (z_1-cx_1)^{(\delta_2/2)-1} 
\int_0^{z_2} dx_2 A_{1,2} . 
\end{align*}
Using Lemma \ref{prf Bessel: asymp} 
in the integrals \eqref{prf Bessel: F3} and \eqref{prf Bessel: G3}, we have 
\begin{align*}
F_1 \sim& \eps^{\delta_1/2} \Gamma (\delta_1/2) \tilde{c}^{-\delta_1/2} A_{1,4}(0,0) , 
\\
G_1 \sim& \eps^{\delta_1/2} \Gamma (\delta_1/2) \tilde{c}^{-\delta_1/2} A_{1,5}(0,0) 
\end{align*}
as $ \eps \to 0+ $. 
Here we have used the fact that 
$ A_{1,4}(\eps,x_1) $ and $ A_{1,5}(\eps,x_1) $ are continuous 
in $ \eps \in [0,\infty ) $ and $ x_1 \in [0,z_1] $. 
Therefore, $F_1/G_1$ approaches to $ A_{1,4}(0,0) / A_{1,5}(0,0) = q(z_2,z_3;z_1)/q(z_2;z_1) $. 
\end{proof}

Third, we study the asymptotic behavior of the numerator $ q(z_2,z_3;z_1) $ as $ z_3 \to 0+ $. 

\begin{Lem} \label{lem: lemma2}
\begin{align*}
\lim_{z_3 \to 0+} z_3^{1-(\delta_1+\delta_2)/2} q(z_2,z_3;z_1) = 
C_1 \tilde{q}(z_2;z_1) 
\end{align*}
with 
\begin{align*}
C_1 = \int_0^1 u^{(\delta_1/2)-1} (1-cu)^{(\delta_2/2)-1} du 
, \quad 
\tilde{q}(z_2;z_1) = \int_0^{z_2} dx_2 A_{3,1} A_{3,2} 
\end{align*}
where $ A_{3,1}=A_{2,1} $ and 
\begin{align*}
A_{3,2} = (x_2)^{1-\delta_1/2} (z_2-cx_2)^{1-\delta_2/2} 
p_{1}^{\delta_1}(0+,x_2) p_{1}^{\delta_2}(0+,z_2-cx_2) . 
\end{align*}
\end{Lem}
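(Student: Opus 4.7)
The plan is to extract the $z_3$-dependence from the inner integral
\begin{align*}
\int_0^{z_3} dx_3\, p_1^{\delta_1}(x_2,x_3)\, p_1^{\delta_2}(z_2-cx_2,z_3-cx_3)
\end{align*}
by a rescaling, and then invoke dominated convergence. First I would substitute $x_3 = z_3 u$ for $u \in (0,1)$, turning this integral into
\begin{align*}
z_3 \int_0^1 du\, p_1^{\delta_1}(x_2, z_3 u)\, p_1^{\delta_2}(z_2-cx_2, z_3(1-cu)).
\end{align*}

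Second, I would exploit the ``smoothly desingularized'' form of the transition density. From \eqref{density} together with the expansion $I_\nu(z)/z^\nu = 2^{-\nu}\sum_{k \ge 0} (z/2)^{2k}/[k!\,\Gamma(k+\nu+1)]$, the function
\begin{align*}
R_\delta(x,y) := y^{1-\delta/2} p_1^{\delta}(x,y) = \tfrac{1}{2} e^{-(x+y)/2}\, \frac{I_{(\delta-2)/2}(\sqrt{xy})}{(\sqrt{xy})^{(\delta-2)/2}}
\end{align*}
extends to a jointly continuous, nonnegative function on $[0,\infty)^2$, with $R_\delta(x,0) = x^{1-\delta/2} p_1^{\delta}(0+,x)$ in agreement with \eqref{to0}. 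Pulling the factors $(z_3 u)^{(\delta_1/2)-1}$ and $(z_3(1-cu))^{(\delta_2/2)-1}$ out of the two densities, the inner integral equals
\begin{align*}
z_3^{(\delta_1+\delta_2)/2-1} \int_0^1 u^{(\delta_1/2)-1}(1-cu)^{(\delta_2/2)-1} R_{\delta_1}(x_2, z_3 u) R_{\delta_2}(z_2-cx_2, z_3(1-cu))\, du,
\end{align*}
so the prefactor $z_3^{1-(\delta_1+\delta_2)/2}$ in the statement cancels exactly.

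Third, I would interchange $\lim_{z_3 \to 0+}$ with the remaining $u$- and $x_2$-integrals by dominated convergence. Because $I_\nu(z)/z^\nu$ has nonnegative Taylor coefficients and is therefore nondecreasing in $z \ge 0$, and because $x_2 z_3 u \le z_2$ and $(z_2-cx_2) z_3 (1-cu) \le z_2$ whenever $z_3 \in (0,1]$, each $R_{\delta_i}$ is uniformly bounded by a constant $M=M(z_2)$. A dominating function for the outer $x_2$-integral is therefore a constant multiple of $A_{2,1}(x_2)$; this is integrable on $(0,z_2)$ because $A_{2,1}(x_2) \sim \mathrm{const} \cdot x_2^{(\delta_1/2)-1}$ near $x_2 = 0$ (by \eqref{from0}, integrable since $\delta_1 > 0$), while $p_1^{\delta_2}(z_1, z_2-cx_2)$ is bounded on $(0,z_2)$ thanks to $z_2-cx_2 \ge (1-c) z_2 > 0$. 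Letting $z_3 \to 0+$ and using $R_\delta(\cdot,0) = (\cdot)^{1-\delta/2} p_1^{\delta}(0+,\cdot)$, the $u$-integral becomes $C_1$ while the remaining $x_2$-integrand is exactly $A_{2,1}(x_2) A_{3,2}(x_2)$, yielding $C_1 \tilde{q}(z_2;z_1)$.

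The only delicate points are the uniform integrability of the outer integrand near $x_2 = 0$ — where $p_1^{\delta_1}(0+,x_2)$ has an integrable singularity when $\delta_1 < 2$ — and near $x_2 = z_2$; both are tame, the first because $\delta_1 > 0$, and the second because the standing assumption $0 < c < 1$ imposed at the start of this section keeps $z_2-cx_2$ bounded away from $0$.
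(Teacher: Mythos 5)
Your proof is correct and follows essentially the same route as the paper's: substitute $x_3 = z_3 u$, pull out the powers of $z_3$ via the desingularized density $y^{1-\delta/2}p_1^{\delta}(x,y)$, and pass to the limit using \eqref{to0}. The only difference is that you make explicit the dominated-convergence justification (via the monotonicity of $I_\nu(z)/z^\nu$ and the integrability of $A_{2,1}$ near $x_2=0$) that the paper leaves implicit.
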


\begin{proof}
Recall that 
\begin{align}
q(z_2,z_3;z_1) = \int_0^{z_3} dx_3 A_{2,3}(z_3,x_3) 
\label{prf Bessel: F2}
\end{align}
where 
\begin{align*}
A_{2,3}(z_3,x_3) = \int_0^{z_2} dx_2 A_{3,1} 
p_{1}^{\delta_1}(x_2,x_3) p_{1}^{\delta_2}(z_2-cx_2,z_3-cx_3) . 
\end{align*}
Here we note that $ A_{3,1} $ does not depend on $ z_3 $ nor $ x_3 $. 
If we take $ x_3=z_3 u $ for $ 0<u<1 $, we have 
\begin{align*}
A_{2,3}(z_3,z_3 u) = \int_0^{z_2} dx_2 A_{3,1} 
p_{1}^{\delta_1}(x_2,z_3 u) p_{1}^{\delta_2}(z_2-cx_2,z_3(1-cu)) . 
\end{align*}
Using \eqref{to0}, we have, as $ z_3 \to 0+ $, 
\begin{align*}
z_3^{2-(\delta_1+\delta_2)/2} A_{2,3}(z_3,z_3 u) 
\to u^{(\delta_1/2)-1} (1-cu)^{(\delta_2/2)-1} 
\int_0^{z_2} dx_2 A_{3,1} A_{3,2} . 
\end{align*}
Changing variables to $ u=x_3/z_3 $ in the integral \eqref{prf Bessel: F2}, we obtain 
\begin{equation*}
z_3^{1-(\delta_1+\delta_2)/2} q(z_2,z_3;z_1) 
= z_3^{2-(\delta_1+\delta_2)/2} \int_0^{1} du A_{2,3}(z_3,z_3 u) , 
\end{equation*}
which converges to $ C_1 \tilde{q}(z_2;z_1) $ as $ z_3 \to 0+ $. 
\end{proof}

Fourth, we study the asymptotic behaviors of $ \tilde{q}(z_2;z_1) $ and $ q(z_2;z_1) $ 
as $ z_2 \to \infty $. 
Recall that 
\begin{equation*}
\begin{split}
\tilde{q}(z_2;z_1) 
=& \int_0^{z_2} dx_2 A_{3,1} A_{3,2} \\
=& \int_0^{z_2} dx_2 \, x_2^{1-\delta_1/2} (z_2-cx_2)^{1-\delta_2/2} 
p_{1}^{\delta_1}(0+,x_2) \\
& \times  
p_{1}^{\delta_2}(z_1,z_2-cx_2) 
p_{1}^{\delta_1}(0+,x_2) p_{1}^{\delta_2}(0+,z_2-cx_2) \\
=& z_2^{3-(\delta_1+\delta_2)/2} 
\int_0^{1} du \, u^{1-\delta_1/2} (1-cu)^{1-\delta_2/2} 
p_{1}^{\delta_1}(0+,z_2u) \\
& \times 
p_{1}^{\delta_2}(z_1,z_2(1-cu)) 
p_{1}^{\delta_1}(0+,z_2u) p_{1}^{\delta_2}(0+,z_2(1-cu)) 
\end{split}
\end{equation*}
and that 
\begin{align*}
q(z_2;z_1) 
=& z_2 \int_0^{1} du \, 
p_{1}^{\delta_1}(0+,z_2u) p_{1}^{\delta_2}(z_1,z_2(1-cu)) . 
\end{align*}

\begin{Lem} \label{lem: lemma3}
Let $ r>0 $. Then 
\begin{align}
\frac{\tilde{q}(z_2;z_2r)}{q(z_2;z_2r)} 
\sim C_2 D(r)^{-\delta_{1}/2} e^{-z_2/2} 
\quad \text{as $ z_2 \to \infty $} \label{Lem 3}
\end{align}
where $ C_2 $ is some positive constant 
depending only on $ \delta_1 $ and $ \delta_2 $ and 
\begin{align*}
D(r) = 1 + \frac{1-c}{1-c + \sqrt{r} c} . 
\end{align*}
\end{Lem}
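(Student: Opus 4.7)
The plan is to compute the asymptotic behavior of $\tilde{q}(z_2;z_2 r)$ and $q(z_2;z_2 r)$ separately as $z_2 \to \infty$ by Laplace's method, and then form the ratio. First I would substitute $z_1 = z_2 r$ and rescale $x_2 = z_2 u$ in the two integral representations displayed just before the statement, expand every $p_1^{\delta_i}(0+,\cdot)$ factor via the exact expression \eqref{from0}, and replace $p_1^{\delta_2}(z_2 r, z_2(1-cu))$ by the asymptotic from \eqref{far}. This asymptotic may be applied uniformly in $u \in [0,1]$ because $1 - cu \ge 1-c > 0$, so $\sqrt{z_2^2\, r(1-cu)} \to \infty$ uniformly; hence the ratio of $p_1^{\delta_2}$ to its equivalent is $1+o(1)$ uniformly on $[0,1]$.

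After collecting algebraic prefactors, both integrals take the form
\[
\int_0^1 u^{\delta_1/2 - 1}\, (1-cu)^{(\delta_2-3)/4}\,(1+o(1))\, e^{-z_2\,\phi(u)}\, du,
\]
where $\phi = \Psi$ in the case of $q$ and $\phi = \Phi$ in the case of $\tilde{q}$, with
\[
\Psi(u) = \tfrac{1-c}{2}\, u + \tfrac{1+r}{2} - \sqrt{r(1-cu)}, \qquad
\Phi(u) = (1-c)\, u + 1 + \tfrac{r}{2} - \sqrt{r(1-cu)}.
\]
Both $\Psi'(u)$ and $\Phi'(u)$ are strictly positive on $(0,1)$ since each summand of $\phi'$ is non-negative when $0 < c < 1$ and $r > 0$, so the hypotheses of Lemma \ref{prf Bessel: asymp} are met with minimum at $u = 0$. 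Two explicit computations do the remaining work: direct evaluation gives $\Phi(0) - \Psi(0) = 1/2$, which produces the factor $e^{-z_2/2}$, and
\[
\frac{\Psi'(0)}{\Phi'(0)} = \frac{(1-c+c\sqrt{r})/2}{(1-c) + c\sqrt{r}/2} = \frac{1-c+c\sqrt{r}}{2(1-c)+c\sqrt{r}} = D(r)^{-1}.
\]

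Applying Lemma \ref{prf Bessel: asymp} to each integral with $\nu = \delta_1/2$ and $f(z_2,u) = (1-cu)^{(\delta_2-3)/4}(1+o(1))$, for which $f(\infty,0) = 1$, each integral becomes asymptotic to $\Gamma(\delta_1/2)\,\phi'(0)^{-\delta_1/2}\,z_2^{-\delta_1/2}\, e^{-z_2\phi(0)}$. Forming the ratio $\tilde{q}/q$, the common $z_2^{-1/2}$, $(2\sqrt{2\pi})^{-1}$, and $r^{-(\delta_2-1)/4}$ cancel, and the Lemma's $\Gamma(\delta_1/2)$ cancels one copy of $\Gamma(\delta_1/2)^{-1}$ appearing in each prefactor; the surviving $\Gamma(\delta_1/2)^{-1}\Gamma(\delta_2/2)^{-1}$ and the residual powers of $2$ combine into
\[
C_2 = \frac{1}{2^{(\delta_1+\delta_2)/2}\,\Gamma(\delta_1/2)\,\Gamma(\delta_2/2)},
\]
multiplied by the factor $(\Psi'(0)/\Phi'(0))^{\delta_1/2} = D(r)^{-\delta_1/2}$ and $e^{-z_2/2}$, which is the desired conclusion. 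The main obstacle is bookkeeping: tracking the many $z_2$- and $r$-powers, the $2\sqrt{2\pi}$ factors, and the Gammas through the two separate Laplace-method applications so that they cancel as advertised. The technical point underpinning this is the uniformity of \eqref{far} over $u \in [0,1]$, which is what lets the $1+o(1)$ correction be absorbed into the function $f$ of Lemma \ref{prf Bessel: asymp} without affecting the leading constant.
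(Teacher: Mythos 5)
Your proposal is correct and follows essentially the same route as the paper: rescale $x_2=z_2u$, use \eqref{from0} for the $p_1^{\delta_i}(0+,\cdot)$ factors and the uniform validity of \eqref{far} for $p_1^{\delta_2}(z_2r,z_2(1-cu))$, then apply Lemma \ref{prf Bessel: asymp} with $\nu=\delta_1/2$ to both integrals (your $\Psi,\Phi$ are exactly the paper's $\phi_2,\phi_1$, and your $\Psi'(0),\Phi'(0),\Phi(0)-\Psi(0)=1/2$ all check out). Your explicit value of $C_2$ is also correct, though the paper does not compute it.
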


\begin{proof}
If we express $\tilde{q}(z_2;z_2r) $ as 
\begin{align*}
r^{(1-\delta_2)/4} z_2^{(\delta_1 - 1)/2} 
\int_0^{1} f_{1}(z_2,u) e^{-z_2 \phi_{1}(u)} u^{\delta_1/2-1} du 
\end{align*}
using 
\begin{align*}
\phi_{1}(u) = b_{1} u + \sqrt{r} \kbra{1-\sqrt{1-cu}} + a_{1} 
\end{align*}
with $ b_{1} = 1-c $ and $ a_{1} = (\sqrt{r}-1)^2/2 + 1/2 $, 
then $ f_{1}(z_2,\cdot\,) $ turns out to be a bounded continuous function such that 
$ f_{1}(z_2,u/z_2) $ converges to a constant 
depending only on $ \delta_1 $ and $ \delta_2 $ as $z_2 \to \infty$, by \eqref{far}. 
Since $ \phi_{1} $ and $ f_{1} $ satisfies the assumptions, 
we can use Lemma \ref{prf Bessel: asymp} and hence we obtain 
\begin{align}
\tilde{q}(z_2;z_2r) 
\sim C_{2,1} r^{(1-\delta_2)/4} \phi_{1}'(0+)^{-\delta_1/2} z_2^{-1/2} e^{- a_{1} z_2} 
\quad \text{as $ z_2 \to \infty $} 
\label{prf Bessel: asymp1}
\end{align}
with some constant $ C_{2,1} $ 
depending only on $ \delta_1 $ and $ \delta_2 $. 

We also have a similar expression 
\begin{align*}
r^{(1-\delta_2)/4} z_2^{(\delta_1-1)/2} 
\int_0^1 f_{2}(z_2, u) e^{-z_2 \phi_{2}(u)} u^{\delta_1/2-1} du 
\end{align*}
for $q(z_2;z_2r) $ using  
\begin{align*}
\phi_{2}(u) = b_{2} u + \sqrt{r} \kbra{1-\sqrt{1-cu}} + a_{2} 
\end{align*}
with $ b_{2} = (1-c)/2 $ and $ a_{2}=(\sqrt{r}-1)^2/2 $ 
and a function $ f_{2}(z_2,\cdot\,) $ as before. 
Thus the same argument yields
\begin{align}
q(z_2;z_2r) 
\sim C_{2,2} r^{(1-\delta_2)/4} \phi_{2}'(0+)^{-\delta_1/2} z_2^{-1/2} e^{-a_{2} z_2} 
\quad \text{as $ z_2 \to \infty $} 
\label{prf Bessel: asymp2}
\end{align}
with some constant $ C_{2,2} $ 
depending only on $ \delta_1 $ and $ \delta_2 $. 

Using \eqref{prf Bessel: asymp1} and \eqref{prf Bessel: asymp2}
together with 
$ \phi_{1}'(0+) = b_{1} + \sqrt{r} c / 2 $ and 
$ \phi_{2}'(0+) = b_{2} + \sqrt{r} c / 2 $, 
we obtain \eqref{Lem 3}. 
\end{proof}

Now we are in a position to prove Theorem \ref{thm: Bessel}. 

\begin{proof}[Proof of Theorem \ref{thm: Bessel}]
Let $ 0<c<1 $. 
We combine Lemmas \ref{lem: lemma1}, \ref{lem: lemma2} and \ref{lem: lemma3} 
to obtain 
\begin{align*}
\lim_{z_2 \to \infty } e^{z_2/2} \lim_{z_3 \to 0+} z_3^{1-(\delta_1+\delta_2)/2} 
\lim_{\eps \to 0+} 
\frac{q(z_2,z_3;\eps,z_2r)}{q(z_2;\eps,z_2r)} 
= C_3 D(r)^{-\delta_{1}/2}  
\end{align*}
for some constant $ C_3 $ which depends only on $ \delta_1 $, $ \delta_2 $ and $ c $. 
Therefore we conclude that 
the conditional probability \eqref{prf Bes: cond trans prob} 
does depend on $ (\eps,z_1) $, 
which proves that $ Z^c $ is non-Markov. 
\end{proof}

\paragraph{
\textbf{Acknowledgements.}
The authors thank Professors Hideki Tanemura and Makoto Katori
for helpful discussions. They also thank Professor 
Tomoyuki Shirai for drawing their attention to \cite{DE02}. 
}


\end{document}